\theoremstyle{plain}
\newtheorem{theorem}{Theorem}[section]
\newtheorem{lemma}[theorem]{Lemma}
\theoremstyle{remark}
\newtheorem*{theorem*}{Theorem}
\def\Z{\mathbb Z}
\def\R{\mathbb{R}}
\def\N{\mathbb{N}}
\title{Kazhdan's Property (T) via  semidefinite optimization}
\author{Tim Netzer}
\address{Tim Netzer, Institut f\"ur Geometrie, TU Dresden, 01062 Dresden, Germany}
\email{tim.netzer@tu-dresden.de}
\author{Andreas Thom}
\address{Andreas Thom, Institut f\"ur Geometrie, TU Dresden, 01062 Dresden, Germany}
\email{andreas.thom@tu-dresden.de}
\begin{document}

\onehalfspace

\begin{abstract} Following an idea of Ozawa, we give a new proof of Kazhdan's property (T) for ${\rm SL}(3,\Z)$, by showing that $\Delta^2- \frac{1}{6} \Delta$ is a hermitian sum of squares in the group algebra, where $\Delta$ is the unnormalized Laplace operator with respect to the natural generating set. This corresponds to a spectral gap of $\frac{1}{72}\sim 0.014$ for the associated random walk operator. 

The sum of squares representation was found numerically by a semidefinite programming algorithm, and then turned into an exact symbolic representation, provided in an attached Mathematica file.
\end{abstract}

\maketitle

\section{Introduction}

Eversince David Kazhdan introduced Property (T) in \cite{MR0209390}, it has been a challenge to prove that particular groups have this property or not. Moreover, it has become an intriguing task to provide explicit bounds (in case of particular groups such as ${\rm SL}(n,\mathbb Z)$) for the associated Kazhdan constants or the spectral gap, see \cites{MR1089795 , MR1813225, MR2197816 , MR1779896}. 
See \cite{bekval, MR1813225} for general information on Kazhdan's Property (T).

\vspace{0.2cm}

In \cite{oz}, Narutaka Ozawa has found a new necessary and sufficient criterion of Property (T) for discrete groups, using a sum-of-squares approach inspired by the solution to Hilbert's 17th problem, and a systematic study of its generalizations to a non-commutative context, see for example \cite{schm}. Let $\Gamma$ be a group and $S \subset \Gamma$ be a symmetric finite generating set and define
$$\Delta := |S| - \sum_{s \in S} s.$$ The group $\Gamma$ has Property (T) if and only if $\Delta$ has a spectral gap at zero, i.e., if and only if there exists some $\varepsilon>0$, such that ${\rm sp}(\Delta) \subset \{0\} \cup [\varepsilon, \infty)$ in every unitary representation of $\Gamma$ on a Hilbert space. Ozawa showed in \cite{oz} that this happens if and only if there exists $b_1,\dots,b_n \in \R[\Gamma]$ such that
\begin{equation} \label{eq1}
\Delta^2 - \varepsilon \Delta = \sum_{i=1}^n b_i^* b_i,
\end{equation}
for some $\varepsilon>0$.
Whereas the sufficiency is obvious, the proof of necessity of this criterion involved some observations in the context of so-called non-commutative real algebraic geometry of the group ring associated with the group -- inspired in part on previous work by the authors \cite{netzthom}. Even though sufficiency was easy to establish, Ozawa's work has been an eye-opener and triggered the concrete attempt to find explicit certificates -- that means solutions to Equation \eqref{eq1} -- for groups well-known to have Property (T). In this short paper, we have achieved this goal for the group ${\rm SL}(3,\mathbb Z)$ and its natural generating set consisting of elementary matrices. Using Matlab, Yalmip, SeDuMi and finally Mathematica, we could improve all previous bounds on the spectral gap by a factor of about $2500$. To the best of our knowledge, the best previously known bound for the Kazhdan constant was $1/300$, see \cite[Theorem 12.1.14]{ozawabrown}, and this leads to a spectral gap for unnormalized Laplace operator of at least $1/15000$ --  for example using the estimate from \cite[Proposition 3]{pakzuk}.

In our solution to Equation \eqref{eq1}, we have that $\varepsilon=\frac{1}{6}$, $n \leq 121$, and each $b_i$ a sum of at most $121$ monomials in the real group ring and supported on elements of length $\leq 2$ with respect to the generating set. Even though the length of the elements in the support is rather low and it appears that we have been extremely lucky, it seems unlikely that this sum-of-squares representation (and maybe any other) could have been found without computer assistance.

Our result also shows that the spectral gap is significant in the sense that the resulting bounds on the mixing time of the Product Replacement Algorithm for abelian groups (see \cite{MR1815215} and the references therein) is actually useful for practical purposes.

Any attempt like this for groups like ${\rm Aut}(\mathbb F_4)$ and related groups has failed so far due to lack of computational power  -- and maybe our limited programming skills. Anyhow, the challenge and the desire to understand {\it why} a group has Property (T) and {\it why} the Product Replacement Algorithm for general groups works so well (see \cite{MR1815215}) would remain untouched by any such computational result.

\section{Description of the algorithm}
Let $A$ be a (non-commutative) $*$-algebra over $\R,$ and let $W\subseteq A$ be a finite dimensional subspace. Checking whether some $a\in A$ belongs to $$\Sigma^2 W:=\left\{ \sum_{i=1}^n w_i^*w_i\mid n\in \N, w_i\in W\right\}$$ can be done with the so-called {\it Gram-matrix method}. Fix a basis  $a_1,\ldots,a_m$ of $W$. A  matrix $P\in {\rm M}_m(\R)$ is called a {\it Gram matrix for $a$}, if \begin{equation}\label{gram}a= (a_1^*,\ldots,a_m^*) P  (a_1,\ldots,a_m)^t\end{equation} holds. Note that any $a\in {\rm span}_\R\{ w^*v\mid w,v\in W\}$ admits a Gram matrix, and in general different ones. Indeed the set of all Gram matrices for $a$ is an affine subspace of ${\rm M}_m(\R)$. The element $a$ is {\it hermitian} (i.e. fulfills $a^*=a$) if and only if it admits a symmetric Gram matrix.
The crucial (and straightforward) observation is that $$a\in \Sigma^2W$$if and only if  $a$ admits a {\it positive semidefinite} symmetric Gram matrix. Now this condition can be checked with semidefinite programming. Indeed, equation (\ref{gram}) boils down to  finitely many affine-linear conditions on the entries of $P$, and looking for a positive semidefinite matrix fulfilling these conditions is the feasibility check of a semidefinite programm (see for example \cite{wo}). There exist numerical software for such problems, in our case the Matlab plugin Yalmip \cite{yalmip}, employing the sdp-solver SeDuMi \cite{sedumi}. With these solvers we were able to find a suitable sums of squares representation for our problem.

Let us go into more details. We consider the group $\Gamma={\rm SL}(3,\Z)$ and its real group algebra $A=\R[\Gamma],$ which is a $*$-algebra via $$\left(\sum_g c_g g\right)^*=\sum_g c_g g^{-1}.$$ We use $12$ generators $M_1,\ldots,M_{12}$ for $\Gamma$, obtained from $$M_1=\left(\begin{array}{ccc}1 & 1 & 0 \\0 & 1 & 0 \\0 & 0 & 1\end{array}\right), M_2=\left(\begin{array}{ccc}1 & 0 & 1 \\0 & 1 & 0 \\0 & 0 & 1\end{array}\right),  M_3=\left(\begin{array}{ccc}1 & 0 & 0 \\0 & 1 & 1 \\0 & 0 & 1\end{array}\right)$$
 by closing up under transposing and taking inverses. We then form all products of length $\leq 2$ in these generators. After removing multiples, this results in $m=121$ group elements, including the identity matrix. We let $W$ be the linear span of these group elements in $\R[\Gamma]$ and take $a_1,\ldots,a_m\in W$ to be the natural basis. Now let $$\Delta:= 12 -\sum_{i=1}^{12} M_i=\frac12 \sum_{i=1}^{12}(1-M_i)^*(1-M_i)\in A$$ be the {\it unnormalized} Laplace operator, and set $$a:=\Delta^2- \epsilon \Delta$$ with $\epsilon =0.2805.$   We used the Matlab plugin Yalmip with sdp-solver SeDuMi to find a positive semidefinite matrix $P \in M_m(\R)$ satisfying (\ref{gram}) in the group algebra $\R[\Gamma]$. From the numerical result $P$ we computed a square-root, rounded to rational entries,  and multiplied with a common denominator $10^6$. The resulting integer matrix is stored in the attached file "RootOfP.txt". 
  
 Now certifying that the result is correct is done as follows (and can be reproduced with the attached Mathematica \cite{wolf} file "Sl3ZComment.nb"). We take the matrix $Q$ stored in "RootOfP.txt" and change it slightly to make all rows sum to zero.  We then compute $P:=\frac{1}{10^{12}}Q^tQ$. Thus $P$ is a positive semidefinite matrix with rational entries, and the total sum over all entries equals zero. With $a_1,\ldots,a_m$ as above we compute the sum of squares $$b:=(a_1^*,\ldots,a_m^*) P (a_1,\ldots,a_m)^t$$ in the group algebra. Since the sum over all entries  of $P$ is zero, $b$ will lie in the augmentation ideal $$\omega[\Gamma]:=\left\{ \sum_g c_g g\mid \sum_g c_g=0\right\}\subseteq \R[\Gamma].$$ We next compare $b$ to $a=\Delta^2- \epsilon \Delta.$ The difference $c=a-b$ is hermitian and  belongs again to the augmentation ideal $\omega[\Gamma]$. Furthermore, each group element in the support of $c$ is a product of at most four of our generators $M_i$, since each $a_i$ is a product of at most two of them. We compute $\Vert c\Vert_1\leq 0.0225$. From Lemma \ref{estimate}  below applied when $d=2$ we know that $c+ 0.09 \cdot \Delta$ is a sum of squares, and so is 
 
 $$a+0.09\Delta=b + c + 0.09 \Delta=\Delta^2 - (\epsilon-0.09)\Delta.$$ 
 
 Since $\epsilon-0.09\geq 0.19 \geq \frac16$, this finishes the proof. Note that all computations in this final step are done with Mathematica and involve rational numbers only -- and are therefore exact.
The missing lemma is a quantitative version  of Lemma 2 in \cite{oz}:
\begin{lemma}\label{estimate}Let $\Gamma$ be a group with finite generating set $S=S^{-1}$, and let $$\Delta= \vert S\vert -\sum_{s\in S} s\in \R[\Gamma]$$ be the unnormalized Laplace operator. Let $c=\sum_g c_g g\in \omega[\Gamma]^h$ be such that whenever $c_g\neq 0$, then $g$ is a product of at most $2^d$ elements from $S$. Then $$c+ 2^{2d-1}\Vert c\Vert_1\cdot  \Delta\in \Sigma^2 \R[\Gamma],$$    where $\Vert c\Vert_1=\sum_g \vert c_g\vert.$ If $S$ does not contain self-inverse elements, then even $$c+ 2^{2d-2}\Vert c\Vert_1\cdot  \Delta\in \Sigma^2 \R[\Gamma].$$
\end{lemma}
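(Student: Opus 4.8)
The plan is to reduce everything to a single estimate of the form $(1-g)^*(1-g)\preceq M\cdot\Delta$ (meaning the difference $M\Delta-(1-g)^*(1-g)$ lies in $\Sigma^2\R[\Gamma]$) for every group element $g$ that is a product of $k\le 2^d$ generators, and then to feed this into a decomposition of $c$ as a signed combination of such elementary squares. The starting observation, already recorded in the text, is the identity $\Delta=\frac12\sum_{s\in S}(1-s)^*(1-s)$, which in particular shows $\Delta\in\Sigma^2\R[\Gamma]$ and, since each summand is itself a square, gives the individual bound $(1-s)^*(1-s)\preceq 2\Delta$ for every $s\in S$. When $S$ contains no self-inverse element, the generators split into pairs $\{s,s^{-1}\}$ on which $(1-s)^*(1-s)=(1-s^{-1})^*(1-s^{-1})$, so that $\Delta=\sum_{\{s,s^{-1}\}}(1-s)^*(1-s)$ is already a sum of squares indexed by pairs; this sharpens the individual bound to $(1-s)^*(1-s)\preceq\Delta$. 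This factor of two is exactly what will distinguish the two claimed constants.

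Next I would pass from a single generator to a word of length $k$ by the telescoping identity $1-s_1\cdots s_k=\sum_{j=1}^k s_1\cdots s_{j-1}(1-s_j)$, writing $1-g=\sum_{j=1}^k v_j$ with $v_j=t_j(1-s_j)$ and $t_j=s_1\cdots s_{j-1}$ a unit. The elementary inequality $\big(\sum_j v_j\big)^*\big(\sum_j v_j\big)\preceq k\sum_j v_j^*v_j$, whose defect is the manifest square $\sum_{i<j}(v_i-v_j)^*(v_i-v_j)$, combined with $v_j^*v_j=(1-s_j)^*(1-s_j)$ (because $t_j^*t_j=1$) and the individual bound from the first step, yields $(1-g)^*(1-g)\preceq 2k^2\Delta$ in general and $(1-g)^*(1-g)\preceq k^2\Delta$ when $S$ has no self-inverse element. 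Since $k\le 2^d$, this gives the clean bounds $(1-g)^*(1-g)\preceq 2^{2d+1}\Delta$ and $(1-g)^*(1-g)\preceq 2^{2d}\Delta$ respectively.

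It remains to assemble $c$. Using that $c$ is hermitian I would symmetrise, $c=\frac12\sum_g c_g(g+g^{-1})$, and then substitute $g+g^{-1}=2-(1-g)^*(1-g)$; the constant terms collect to $\sum_g c_g$, which vanishes because $c\in\omega[\Gamma]$, leaving the exact representation $c=-\frac12\sum_g c_g(1-g)^*(1-g)$. Now I would split the sum according to the sign of $c_g$: the terms with $c_g<0$ contribute a genuine sum of squares, while each term with $c_g>0$ is dominated using the estimate above, since $\tfrac12 c_g\big(M\Delta-(1-g)^*(1-g)\big)\in\Sigma^2\R[\Gamma]$. Summing the positive-coefficient contributions produces the multiplier $\tfrac12 M\sum_{c_g>0}c_g$, and the augmentation condition forces $\sum_{c_g>0}c_g=\tfrac12\Vert c\Vert_1$; with $M=2^{2d+1}$ this is $2^{2d-1}\Vert c\Vert_1$, and with $M=2^{2d}$ it is $2^{2d-2}\Vert c\Vert_1$. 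Because $\Delta\in\Sigma^2\R[\Gamma]$, enlarging the multiplier only keeps us inside the cone, so both displayed statements follow.

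The one genuinely delicate point is the bookkeeping around self-inverse generators, which is where the whole factor-of-two sharpening lives. The decomposition $\Delta=\sum_{\{s,s^{-1}\}}(1-s)^*(1-s)$ is valid only in the absence of self-inverse elements, and one must check that the telescoping never forces a bound stronger than $(1-s)^*(1-s)\preceq\Delta$ for the generators $s_j$ actually occurring. Everything else — the Cauchy--Schwarz defect, the vanishing of the constant term via augmentation, and the identity $\sum_{c_g>0}c_g=\tfrac12\Vert c\Vert_1$ — is routine, so I expect the main effort to go into stating the two regimes of the individual bound cleanly and confirming that $k\le 2^d$ propagates correctly through the squaring.
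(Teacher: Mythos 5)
Your proposal is correct and follows essentially the same route as the paper: the paper's key inequality $(1-gh)^*(1-gh)\leq 2(1-g)^*(1-g)+2(1-h)^*(1-h)$, iterated $d$ times, is just the $k=2$ case of your telescoping-plus-Cauchy--Schwarz bound, and both arguments then finish with the identical decomposition $-c=\sum_{g\neq e}\frac{c_g}{2}(1-g)^*(1-g)$, a split by sign of $c_g$, and the estimate $\sum_{g\neq e,\,c_g>0}c_g\leq\frac12\Vert c\Vert_1$ (which is an inequality rather than the equality you state, but that only helps you). The self-inverse-free refinement via $(1-s)^*(1-s)\leq\Delta$ is also exactly the paper's.
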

\begin{proof} From the proof of Lemma 2 in \cite{oz} we use the equation $$(1-gh)^*(1-gh) \leq 2(1-g)^*(1-g) + 2(1-h)^*(1-h)$$ for group elements $g,h$, where $\xi \leq\eta$ means that $\eta-\xi$ is a sum of squares. By iteration we immediately obtain \begin{equation}\label{iter}(1-g_1\cdots g_{2^d})^*(1-g_1\cdots g_{2^d} )\leq 2^d \sum_{i=1}^{2^d} (1-g_i)^*(1-g_i)\end{equation} for group elements $g_i$.
For $c=\sum_g c_g g\in \omega[\Gamma]^h$ we have $\sum_g c_g=0$ and $c_g=c_{g^{-1}}$ for all $g$. Thus $$-c=\sum_{g\neq e} \frac{c_g}{2} \left(2-g-g^{-1}\right)=\sum_{g\neq e} \frac{c_g}{2}(1-g)^*(1-g).$$  Since every occuring $g$ is a product of at most $2^d$ elements from $S$, and $$(1-s)^*(1-s)\leq 2\Delta$$ for $s\in S$, we get from (\ref{iter}) $$-c\leq \sum_{g\neq e, c_g> 0} \frac{c_g}{2}(1-g)^*(1-g)\leq  \left(\sum_{g\neq e,c_g> 0} c_g \right)  2^{2d} \Delta.$$  From $\sum_g c_g=0$ we obtain $\sum_{g\neq e, c_g>0} c_g \leq \frac12 \Vert c\Vert_1$, the result. In case that $S$ does not contain self-inverse elements, we even have $(1-s)^*(1-s)=2-s-s^{.1}\leq \Delta.$ This yields the improved statement.
\end{proof}

\section*{Acknowledgments}

This research was supported by ERC-StGr No.\ 277728 "Geometry and Analysis of Group Rings". 

\begin{bibdiv}
\begin{biblist}

\bib{bekval}{book}{
   author={Bekka, B.},
   author={de la Harpe, P.},
   author={Valette, A.},
   title={Kazhdan's property (T)},
   series={New Mathematical Monographs},
   volume={11},
   publisher={Cambridge University Press},
   place={Cambridge},
   date={2008},
   pages={xiv+472},
}

\bib{ozawabrown}{book}{
   author={Brown, N. P.},
   author={Ozawa, N.},
   title={$C^*$-algebras and finite-dimensional approximations},
   series={Graduate Studies in Mathematics},
   volume={88},
   publisher={American Mathematical Society, Providence, RI},
   date={2008},
   pages={xvi+509},
}

\bib{MR1089795}{article}{
   author={Burger, M.},
   title={Kazhdan constants for ${\rm SL}(3,{\bf Z})$},
   journal={J. Reine Angew. Math.},
   volume={413},
   date={1991},
   pages={36--67},
}

\bib{MR2197816}{article}{
   author={Kassabov, M.},
   title={Kazhdan constants for ${\rm SL}_n({\Bbb Z})$},
   journal={Internat. J. Algebra Comput.},
   volume={15},
   date={2005},
   number={5-6},
   pages={971--995},
}

\bib{MR0209390}{article}{
   author={Ka{\v{z}}dan, D. A.},
   title={On the connection of the dual space of a group with the structure
   of its closed subgroups},
   language={Russian},
   journal={Funkcional. Anal. i Prilo\v zen.},
   volume={1},
   date={1967},
   pages={71--74},
}

\bib{MR1815215}{article}{
   author={Lubotzky, A.},
   author={Pak, I.},
   title={The product replacement algorithm and Kazhdan's property (T)},
   journal={J. Amer. Math. Soc.},
   volume={14},
   date={2001},
   number={2},
   pages={347--363 (electronic)},
}

\bib{yalmip}{inproceedings}{
    AUTHOR = { J. L\"ofberg },
    TITLE = { YALMIP : A Toolbox for Modeling and Optimization in {MATLAB} },
    BOOKTITLE = { Proceedings of the CACSD Conference },
    YEAR = { 2004 },
    ADDRESS = { Taipei, Taiwan },
}

\bib{netzthom}{article}{
   author={Netzer, T.},
   author={Thom, A.},
   title={Real closed separation theorems and applications to group
   algebras},
   journal={Pacific J. Math.},
   volume={263},
   date={2013},
   number={2},
   pages={435--452},
}

\bib{oz}{article}{
TITLE={Noncommutative real algebraic geometry of Kazhdan's property (T)},
AUTHOR={N. Ozawa},
JOURNAL={J. Inst. Math. Jussieu},
YEAR={to appear},
}

\bib{schm}{article}{
   author={Schm{\"u}dgen, Konrad},
   title={Noncommutative real algebraic geometry---some basic concepts and
   first ideas},
   conference={
      title={Emerging applications of algebraic geometry},
   },
   book={
      series={IMA Vol. Math. Appl.},
      volume={149},
      publisher={Springer, New York},
   },
   date={2009},
   pages={325--350},
}

\bib{pakzuk}{article}{
   author={Pak, I.},
   author={{\.Z}uk, A.},
   title={On Kazhdan constants and mixing of random walks},
   journal={Int. Math. Res. Not.},
   date={2002},
   number={36},
   pages={1891--1905},
}

\bib{MR1813225}{article}{
   author={Shalom, Y.},
   title={Bounded generation and Kazhdan's property (T)},
   journal={Inst. Hautes \'Etudes Sci. Publ. Math.},
   number={90},
   date={1999},
   pages={145--168 (2001)},
}

\bib{MR1779896}{article}{
   author={Shalom, Y.},
   title={Explicit Kazhdan constants for representations of semisimple and
   arithmetic groups},
   language={English, with English and French summaries},
   journal={Ann. Inst. Fourier (Grenoble)},
   volume={50},
   date={2000},
   number={3},
}

\bib{sedumi}{article}{
  author =       {Sturm, J.F.},
  title =        {Using {SeDuMi} 1.02, a {MATLAB} toolbox for optimization over symmetric cones},
  journal =      {Optimization Methods and Software},
  year =         {1999},
  volume =       {11--12},
  pages =        {625--653},
  note =         {Version 1.05 available from {\texttt{http://fewcal.kub.nl/sturm}}}
}

\bib{wolf}{book}{
Author ={Wolfram Research  Inc.},
Title = {Mathematica},
Edition ={Version 9.0},
Address={Champaign, Illinois},
year={2012},
}

\bib{wo}{book}{
     TITLE = {Handbook of semi-definite programming},
    SERIES = {International Series in Operations Research \& Management
              Science, 27},
    EDITOR = {H. Wolkowicz and R. Saigal and L. Vandenberghe},
      NOTE = {Theory, algorithms, and applications},
 PUBLISHER = {Kluwer Academic Publishers},
   ADDRESS = {Boston, MA},
      YEAR = {2000},
     PAGES = {xxviii+654},
}

\end{biblist}
\end{bibdiv} 

\end{document}